\newtheorem{thm}{Theorem}[section]
\newtheorem{cor}[thm]{Corollary}
\newtheorem{lem}[thm]{Lemma}
\newtheorem{exm}[thm]{Example}
\newtheorem{prop}[thm]{Proposition}
\theoremstyle{definition}
\newtheorem{defn}[thm]{Definition}
\theoremstyle{remark}
\newtheorem{rem}[thm]{\bf Remark}
\numberwithin{equation}{section}
\begin{document}
\title[A non-vanishing result on the singularity category]{A non-vanishing result on the singularity category}
\author[Chen, Li, Zhang, Zhao] {Xiao-Wu Chen, Zhi-Wei Li, Xiaojin Zhang, Zhibing Zhao$^*$}

\thanks{$^*$ The corresponding author}
%\thanks{}
\subjclass[2010]{16E05, 18G80, 16S88}
\date{\today}

\thanks{xwchen$\symbol{64}$mail.ustc.edu.cn, zhiweili@jsnu.edu.cn, xjzhang@jsnu.edu.cn, zbzhao@ahu.edu.cn}
\keywords{singularity category, periodic module, silting subcategory, singular presilting conjecture, Leavitt algebra}

\maketitle

\dedicatory{}%
\commby{}%

\begin{abstract}
We prove that a virtually periodic object in an abelian category gives rise to a non-vanishing result on certain Hom groups in the singularity category. Consequently, for any artin algebra with  infinite global dimension, its singularity category has no silting subcategory, and the associated differential graded Leavitt algebra has a non-vanishing cohomology in each degree. We verify the Singular Presilting Conjecture for singularly-minimal algebras and ultimately-closed  algebras. We obtain a trichotomy on the Hom-finiteness of the cohomology of  differential graded Leavitt algebras.
\end{abstract}

\section{Introduction}

The singularity category is a fundamental homological invariant for a ring with infinite global dimension. It is traced  back to \cite{Buc} and is rediscovered in the geometric context in \cite{Orl}. It has received increasing attention from people in different subjects.

Recall that a module is periodic if its higher syzygy is isomorphic to itself. These modules play a particular role in the singularity category \cite{Usui}. We propose a slightly more general notion: a module is called \emph{virtually periodic} if its higher syzygy lies in the extension closure of the module itself; see Definition~\ref{defn:vp}. A prototype is the semisimple quotient module of a left artinian ring modulo its Jacobson radical.

The central result of this paper is Theorem~\ref{thm}, which states the following non-vanishing property: for a virtually $d$-periodic module $M$, the Hom groups between $M$ and its $(nd)$-th suspension $\Sigma^{nd}(M)$ in the singularity category are always non-vanishing for all integers $n$.

There are two consequences of the above non-vanishing result. The first one states that the singularity category of a left artinian ring with infinite global dimension does not have a silting subcategory in the sense of \cite{KV, AI}; see Corollary~\ref{cor:silt}. This strengthens \cite[Theorem~1]{AHMW}, and partially supports the  Singular Presilting Conjecture \cite{CHQW} and thus the well-known Auslander-Reiten Conjecture \cite{AR75}.

We prove that the  Singular Presilting Conjecture and thus the Auslander-Reiten Conjecture hold for singularly-minimal artin algebras; see Proposition~\ref{prop:sin-min}.  We prove that the Singular Presilting Conjecture holds for ultimately-closed algebras in the sense of \cite{Jans}, which include periodic algebras and syzygy-finite algebras; see Proposition~\ref{prop:SPC} and Remark~\ref{rem:vuc}.

The second consequence states that the differential graded Leavitt algebra \cite{CW} associated to an artin algebra with infinite global dimension has a non-vanishing cohomology in each degree; see Proposition~\ref{prop:Leavitt}. In Section~4, we use virtually periodic objects to characterize the Hom-finiteness of the singularity category. We obtain a trichotomy on the cohomlogies of the  differential graded Leavitt algebras associated to  artin algebras; see Proposition~\ref{prop:Leavitt2}. We mention that the results on differential graded Leavitt algebras are analogous to the ones in \cite{AV} on the stable cohomology algebras of the residue fields of commutative  noetherian local rings.

We will denote the suspension functor in any triangulated category by $\Sigma$, and write dg for `differential graded'. For triangulated categories, we refer to \cite{Hap}; for artin algebras, we refer to \cite{ARS}.

\section{Virtually periodic objects}

Let $\mathcal{A}$ be an abelian category  with enough projective objects. The latter condition means that for each object $M$, there is an epimorphism $P\rightarrow M$ with $P$ projective. We denote by $\mathcal{P}$ the full subcategory formed by all projective objects, and by $\underline{\mathcal{A}}$ the stable category of $\mathcal{A}$ modulo morphisms factoring through projective objects.

For any object $M$, the first syszygy $\Omega(M)$ of $M$ is defined to be the kernel of any epimorphism $P\rightarrow M$ with $P$ projective.  We mention that $\Omega(M)$ is  uniquely defined in the stable category $\underline{\mathcal{A}}$. Denote by $\langle M\rangle$ the smallest full subcategory of $\mathcal{A}$, which contains $\{M\}\cup \mathcal{P}$ and is closed under direct summands and extensions. The extension-closed condition means that for any short exact sequence $0\rightarrow X\rightarrow Y\rightarrow Z \rightarrow 0$ with $X, Z\in \langle M\rangle$, we have that $Y$ necessarily lies in $\langle M\rangle$.

Let $d\geq 1$. Recall that a non-projective object $M$ is called \emph{$d$-periodic} if there is an isomorphism $\Omega^d(M)\simeq M$ in $\underline{\mathcal{A}}$. A $d$-periodic object necessarily has infinite projective dimension. The study of periodic modules is traced back to \cite{APR}. We mention that periodic modules  play a  role in the singularity category \cite{Usui}.

\begin{defn}\label{defn:vp}
Let $d\geq 1$. An object $M$ in $\mathcal{A}$ is said to be \emph{virtually $d$-periodic} provided that $M$ has infinite projective dimension and that $\Omega^d(M)$ lies in $\langle M\rangle$.
\end{defn}

We observe that a $d$-periodic object is virtually $d$-periodic. The following fact is easy.

\begin{lem}\label{lem:n}
Assume that $M$ is virtually $d$-periodic. Then $M$ is virtually $(nd)$-periodic for any $n\geq 1$.
\end{lem}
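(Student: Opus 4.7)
The plan is to induct on $n \geq 1$. The base case $n = 1$ is precisely the hypothesis, and the requirement of infinite projective dimension is automatic throughout since $M$ is unchanged. For the inductive step, it suffices to establish the following key closure property: for every object $X \in \langle M \rangle$, any $d$-th syzygy $\Omega^d(X)$ again lies in $\langle M \rangle$. Indeed, applying this to $X = \Omega^{nd}(M)$ and noting that $\Omega^{(n+1)d}(M)$ is a $d$-th syzygy of $X$ (up to projective summands) yields the induction.

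To prove the closure property I would introduce
\[\mathcal{C} = \{X \in \mathcal{A} \mid \Omega^d(X) \in \langle M \rangle\}\]
and show $\langle M\rangle \subseteq \mathcal{C}$. First, since $\langle M\rangle$ contains $\mathcal{P}$ and is closed under direct summands, Schanuel's lemma renders the defining condition independent of the choice of syzygies. Next, $M \in \mathcal{C}$ is the virtual $d$-periodicity hypothesis, and every projective lies in $\mathcal{C}$ trivially. Closure of $\mathcal{C}$ under direct summands follows from the additivity of $\Omega^d$ together with the summand-closedness of $\langle M\rangle$. For closure under extensions, given $0 \to X \to Y \to Z \to 0$ with $X, Z \in \mathcal{C}$, the Horseshoe Lemma produces compatible projective covers that give a short exact sequence $0 \to \Omega(X) \to \Omega(Y) \to \Omega(Z) \to 0$; iterating $d$ times yields a short exact sequence $0 \to \Omega^d(X) \to \Omega^d(Y) \to \Omega^d(Z) \to 0$ for suitable choices, and the extension-closedness of $\langle M\rangle$ then forces $\Omega^d(Y) \in \langle M\rangle$, i.e.\ $Y \in \mathcal{C}$.

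Since $\langle M\rangle$ is by definition the smallest full subcategory containing $\{M\} \cup \mathcal{P}$ that is closed under direct summands and extensions, we conclude $\langle M\rangle \subseteq \mathcal{C}$, and the lemma follows by induction. The only mildly subtle point, which I expect to be the main bookkeeping obstacle, is the ambiguity of $\Omega^d$ up to projective summands; this is exactly absorbed by the fact that $\langle M\rangle$ contains all of $\mathcal{P}$ and is summand-closed, so that all candidate statements of the form ``$\Omega^d(X) \in \langle M\rangle$'' are equivalent regardless of the choices made in the horseshoe construction.
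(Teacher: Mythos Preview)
Your proof is correct and follows essentially the same approach as the paper: both argue by induction on $n$ and use the Horseshoe Lemma to show that $\Omega^d$ sends $\langle M\rangle$ into $\langle M\rangle$. The paper phrases this as the chain $\Omega^d(\Omega^{nd}(M))\in\langle\Omega^d(M)\rangle\subseteq\langle M\rangle$ in a single line, whereas you make the same closure property explicit via the auxiliary subcategory $\mathcal{C}$; the content is identical.
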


\begin{proof}
We use induction on $n$ and assume that $M$ is virtually $(nd)$-periodic. By applying Horseshoe Lemma and the fact that $\Omega^{nd}(M)\in \langle M\rangle$, we infer that
 $$\Omega^{(n+1)d}(M)=\Omega^d(\Omega^{nd}(M))\in \langle \Omega^d(M) \rangle\subseteq \langle M \rangle.$$
 Here, the latter inclusion uses the fact that $\Omega^d(M)\in \langle M\rangle$. We infer that $M$ is also virtually $(n+1)d$-periodic.
\end{proof}

For a left noetherian ring $\Lambda$,  we denote by $\Lambda\mbox{-mod}$ the abelian category of finitely generated left $\Lambda$-modules. The following  examples motivate Definition~\ref{defn:vp}.

\begin{exm}\label{exm:artin}
{\rm Let $\Lambda$ be a left artinian ring with infinite global dimension. Set $\Lambda_0=\Lambda/J$ with $J$ its Jacboson radical.  Then $\Lambda_0$ is  virtually $1$-periodic in $\Lambda\mbox{-mod}$.

Indeed, the projective dimension of $\Lambda_0$ is equal to the global dimension of $\Lambda$, and thus is infinite. Moreover, $\Omega(\Lambda_0)$ is clearly an iterated extension of simple modules. Then we infer that $\Omega(\Lambda_0)\in \langle \Lambda_0\rangle$.}
\end{exm}

For an object $X$ in $\mathcal{A}$, we denote by ${\rm add}\; X$  the smallest full additive subcategory which contains $X$ and is closed under direct summands.

\begin{exm}\label{exm:uc}
{\rm Recall from \cite[Section~3]{Jans} that an object $M$ in $\mathcal{A}$ is \emph{ultimately-closed},  if there exist $d\geq 1$ and some object $X$ in ${\rm add}\; (M\oplus \Omega(M)\oplus \cdots \oplus \Omega^{d-1}(M))$ such that $\Omega^d(M)$ and $X$ are isomorphic in $\underline{\mathcal{A}}$. In this case, the object $M\oplus \Omega(M)\oplus \cdots \oplus \Omega^{d-1}(M)$ is virtually $1$-periodic.

The abelian category $\mathcal{A}$ is called \emph{ultimately-closed}, if any object is ultimately-closed. Following \cite[Section~3]{Jans}, a left noetherian ring $\Lambda$ is \emph{ultimately-closed} if $\Lambda\mbox{-mod}$ is ultimately-closed.}
\end{exm}

  Recall that  a hypersurface ring is of the form $S/(x)$ with $S$ a regular local ring and $x$ a nonzero element. By \cite[Theorem~6.1]{Eis}, the higher syzygy of each module over a hypersurface ring is $2$-periodic and thus each module is ultimately-closed. Consequently,  a hypersurface ring is ultimately-closed. Recall that  a finite dimensional algebra over a field is  \emph{periodic} if it has a periodic bimodule resolution; see \cite{BBK} for concrete examples.  By a similar reasoning, we infer that a periodic algebra is ultimately-closed.

In what follows, we give further classes of   ultimately-closed rings.

\begin{exm}\label{exm:syzygy-finite}
{\rm For any $d\geq 1$, we denote by $\Omega^d(\mathcal{A})$ the full subcategory of $\mathcal{A}$ formed by those objects that are isomorphic to $\Omega^d(M)$ in $\underline{\mathcal{A}}$ for some object $M$.
The  abelian category $\mathcal{A}$ is \emph{syzygy-finite} provided that there exist $d\geq 1$ and an object $E$ such that $\Omega^d(\mathcal{A})\subseteq {\rm add}\; E$. In this case, the object $E$ is virtually $d$-periodic provided that $\mathcal{A}$ has infinite global dimension. Moreover, if such an object $E$ already belongs to $\Omega^d(\mathcal{A})$, then $E$ is virtually $1$-periodic.

We observe that a syzygy-finite Krull-Schmidt abelian category is necessarily ultimately-closed; compare \cite[p.73]{AR75}. For example, let $\Lambda$ be  a left artinian ring which is \emph{syzygy-finite}, that is, $\Lambda\mbox{-mod}$ is syzygy-finite. Then  the module category $\Lambda\mbox{-mod}$ is ultimately-closed, and thus $\Lambda$ is ultimately-closed.

We mention that syzygy-finite artinian rings include artinian rings of finite representation type,  artinian  rings with square-zero Jacobson radical, and finite dimensional monomial algebras  by \cite[Theorem~I]{Zim}.}
\end{exm}

\begin{exm}\label{exm:CM}
{\rm We assume that $R$ is  a commutative noetherian complete local ring,  which is non-regular and  Cohen-Macaulay of finite Cohen-Macaulay-type \cite{AR}.  We  take $E$  to be the direct sum of all indecomposable maximal Cohen-Macaulay $R$-modules, and let $d$  be the Krull dimension of $R$.  Then we have $\Omega^d(R\mbox{-mod})\subseteq {\rm add}\; E$, because the $d$-th syzygy of any finitely generated $R$-module is maximal Cohen-Macaulay. In particular, the category $R\mbox{-mod}$ is syzygy-finite. The completeness of $R$ implies that $R\mbox{-mod}$ is Krull-Schmidt, and thus is ultimately-closed. Therefore, the ring $R$ is ultimately-closed.

 We assume in addition that $R$ is Gorenstein. Then we have $\Omega^d(R\mbox{-mod})={\rm add}\; E$, in which case $E$ is virtually $1$-periodic. For another choice of such a module $E$, we refer to \cite[Proposition~3.2]{Kal}.}
\end{exm}

Denote by $\mathbf{D}^b(\mathcal{A})$ the bounded derived category of $\mathcal{A}$. Using the canonical functor, we  view the bounded homotopy category $\mathbf{K}^b(\mathcal{P})$ as a thick triangulated subcategory of $\mathbf{D}^b(\mathcal{A})$. Following \cite{Buc}, the \emph{singularity category} of $\mathcal{A}$ is defined to be the following Verdier quotient triangulated category
$$\mathbf{D}_{\rm sg}(\mathcal{A})=\mathbf{D}^b(\mathcal{A})/\mathbf{K}^b(\mathcal{P}).$$
As usual, we identify any object $M$ in $\mathcal{A}$ with the corresponding stalk complex concentrated in degree zero, which is still denoted by $M$. The latter is also viewed as an object in $\mathbf{D}_{\rm sg}(\mathcal{A})$. Consequently,  $\Sigma^n(M)$ will mean the corresponding stalk complex concentrated in degree $-n$.

The following fact is well known; compare \cite[Lemma~2.2]{Chen} and \cite[Lemma~2.2.2]{Buc}.

\begin{lem}\label{lem:sing}
Let $M$ be an object in $\mathcal{A}$. Then there is an isomorphism $M\simeq \Sigma\Omega(M)$ in $\mathbf{D}_{\rm sg}(\mathcal{A})$.
\end{lem}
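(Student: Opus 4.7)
The plan is to build the desired isomorphism directly from the syzygy short exact sequence. Choose any epimorphism $P\twoheadrightarrow M$ with $P\in\mathcal{P}$, whose kernel is $\Omega(M)$ by definition. This gives a short exact sequence
$$0\longrightarrow \Omega(M)\longrightarrow P\longrightarrow M\longrightarrow 0$$
in $\mathcal{A}$, which induces a distinguished triangle
$$\Omega(M)\longrightarrow P\longrightarrow M\stackrel{\partial}{\longrightarrow}\Sigma\Omega(M)$$
in $\mathbf{D}^b(\mathcal{A})$.

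Next, I would pass to the Verdier quotient $\mathbf{D}_{\rm sg}(\mathcal{A})=\mathbf{D}^b(\mathcal{A})/\mathbf{K}^b(\mathcal{P})$. Since $P\in\mathcal{P}$, the stalk complex $P$ lies in $\mathbf{K}^b(\mathcal{P})$ and is therefore isomorphic to the zero object in $\mathbf{D}_{\rm sg}(\mathcal{A})$. Applying the quotient functor to the triangle above yields a distinguished triangle in $\mathbf{D}_{\rm sg}(\mathcal{A})$ whose middle term is zero, and in any triangulated category this forces the connecting morphism $\partial\colon M\to\Sigma\Omega(M)$ to be an isomorphism.

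There is essentially no obstacle here: the only point worth a sentence is the well-definedness issue, namely that the isomorphism class of $\Omega(M)$ in $\underline{\mathcal{A}}$ (and \emph{a fortiori} in $\mathbf{D}_{\rm sg}(\mathcal{A})$) does not depend on the choice of projective cover, which is already acknowledged in the discussion preceding Definition~\ref{defn:vp}. The argument then proceeds uniformly for any such choice.
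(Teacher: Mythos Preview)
Your proposal is correct and follows essentially the same route as the paper: both take the syzygy short exact sequence, pass to the induced triangle in $\mathbf{D}_{\rm sg}(\mathcal{A})$, observe that $P$ vanishes there, and conclude that the connecting morphism $M\to\Sigma\Omega(M)$ is an isomorphism. The paper cites \cite[Lemma~I.1.7]{Hap} for this last step, but your appeal to the general fact about triangles with a zero vertex is equivalent.
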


\begin{proof}
Recall that any short exact sequence in $\mathcal{A}$ induces canonically an exact triangle in $\mathbf{D}^b(\mathcal{A})$ and thus an exact triangle in $\mathbf{D}_{\rm sg}(\mathcal{A})$. We consider the following exact sequence $0\rightarrow \Omega(M)\rightarrow P\rightarrow M\rightarrow 0$. As the projective object $P$ vanishes in $\mathbf{D}_{\rm sg}(\mathcal{A})$, the induced exact triangle in $\mathbf{D}_{\rm sg}(\mathcal{A})$ is the form
$$\Omega(M) \longrightarrow 0\longrightarrow M\longrightarrow \Sigma\Omega(M).$$
By \cite[Lemma~I.1.7]{Hap}, we infer that the morphism $M\rightarrow \Sigma\Omega(M)$ is an isomorphism, as required. \end{proof}

\begin{thm}\label{thm}
Let $M$ be a virtually $d$-periodic object in $\mathcal{A}$. Then we have
$${\rm Hom}_{\mathbf{D}_{\rm sg}(\mathcal{A})}(M, \Sigma^{nd}(M))\neq 0$$
for any integer $n$.
\end{thm}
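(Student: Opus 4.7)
The plan is to reduce the target Hom group to one involving the syzygy $\Omega^{nd}(M)$ via Lemma~\ref{lem:sing}, and then to produce a non-zero element by exploiting the fact that $\Omega^{nd}(M)\in\langle M\rangle$ (Lemma~\ref{lem:n}).

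First, iterating Lemma~\ref{lem:sing} gives an isomorphism $M\simeq \Sigma^{nd}(\Omega^{nd}(M))$ in $\mathbf{D}_{\rm sg}(\mathcal{A})$ for every $n\geq 1$; equivalently, $\Omega^{nd}(M)\simeq \Sigma^{-nd}(M)$. This rewrites the desired Hom as
\[
\mathrm{Hom}_{\mathbf{D}_{\rm sg}(\mathcal{A})}(M,\Sigma^{nd}(M)) \;\simeq\; \mathrm{Hom}_{\mathbf{D}_{\rm sg}(\mathcal{A})}(\Omega^{nd}(M),M) \qquad (n\geq 1),
\]
and symmetrically $\mathrm{Hom}_{\mathbf{D}_{\rm sg}(\mathcal{A})}(M,\Sigma^{-nd}(M))\simeq \mathrm{Hom}_{\mathbf{D}_{\rm sg}(\mathcal{A})}(M,\Omega^{nd}(M))$ for $n\geq 1$. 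The case $n=0$ is immediate: $\mathrm{id}_M\in \mathrm{Hom}_{\mathbf{D}_{\rm sg}(\mathcal{A})}(M,M)$ is non-zero, because $M\simeq 0$ in $\mathbf{D}_{\rm sg}(\mathcal{A})$ would force $M$ to have finite projective dimension, contradicting the virtual-periodicity hypothesis.

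For $n\neq 0$, Lemma~\ref{lem:n} gives $\Omega^{nd}(M)\in\langle M\rangle$, and I would invoke that $\mathrm{Hom}_{\mathbf{D}_{\rm sg}(\mathcal{A})}(-,M)$ is cohomological: the class
$\mathcal{N}_M=\{N\in\mathcal{A}:\mathrm{Hom}_{\mathbf{D}_{\rm sg}(\mathcal{A})}(N,M)=0\}$
contains $\mathcal{P}$ (projectives are zero in the singularity category) and is closed under direct summands and extensions, via the long exact sequence attached to the triangle induced by a short exact sequence. Since $\mathrm{id}_M\neq 0$, we have $M\notin\mathcal{N}_M$. An analogous statement holds for the dual class controlling $\mathrm{Hom}_{\mathbf{D}_{\rm sg}(\mathcal{A})}(M,-)$.

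The main obstacle is that these closure properties only yield $\langle M\rangle\not\subseteq \mathcal{N}_M$, not the non-vanishing for the specific object $\Omega^{nd}(M)$. To bridge this gap I would use the explicit structure of $\Omega^{nd}(M)$ as a direct summand of an iterated extension of copies of $M$ and projectives in $\mathcal{A}$ (afforded by $\Omega^{nd}(M)\in\langle M\rangle$), combine it with the Yoneda-style isomorphism $M\simeq \Sigma^{nd}(\Omega^{nd}(M))$ represented by $0\to \Omega^{nd}(M)\to P_{nd-1}\to \cdots \to P_0\to M\to 0$, and unwind these in $\mathbf{D}_{\rm sg}(\mathcal{A})$ to factor $\mathrm{id}_M$ as a composite passing through $\mathrm{Hom}_{\mathbf{D}_{\rm sg}(\mathcal{A})}(\Omega^{nd}(M),M)$. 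Since the composite is $\mathrm{id}_M\neq 0$, the intermediate factor cannot vanish, which is the desired conclusion. Working out this structural unwinding carefully is the technical heart of the proof.
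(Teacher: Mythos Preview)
You have all the right ingredients, but you assemble the contradiction in the wrong direction, and the ``unwinding'' you propose to repair it is not a proof.

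Concretely: with your class $\mathcal{N}_M=\{N:\mathrm{Hom}_{\mathbf{D}_{\rm sg}(\mathcal{A})}(N,M)=0\}$ you know $M\notin\mathcal{N}_M$ and you want $\Omega^{nd}(M)\notin\mathcal{N}_M$. Closure of $\mathcal{N}_M$ under extensions, summands and $\mathcal{P}$ only tells you that \emph{if} some object lies in $\mathcal{N}_M$ then so does everything in its $\langle\,\cdot\,\rangle$-closure. Since you only know $\Omega^{nd}(M)\in\langle M\rangle$ (and not $M\in\langle\Omega^{nd}(M)\rangle$), this closure is useless here, exactly as you noticed. Your proposed remedy --- factoring $\mathrm{id}_M$ through $\mathrm{Hom}(\Omega^{nd}(M),M)$ by ``unwinding'' a filtration of $\Omega^{nd}(M)$ --- does not type-check: a filtration of $\Omega^{nd}(M)$ by copies of $M$ produces maps among suspensions of $M$ and $\Omega^{nd}(M)$ in various directions, but there is no canonical way to extract from it a nonzero element of $\mathrm{Hom}(\Omega^{nd}(M),M)$, let alone one through which $\mathrm{id}_M$ factors.

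The fix is a one-line swap, and it is exactly what the paper does. Vary the \emph{other} slot: set
\[
\mathcal{S}=\{X\in\mathcal{A}\;:\;\mathrm{Hom}_{\mathbf{D}_{\rm sg}(\mathcal{A})}(M,\Sigma^{nd}(X))=0\}.
\]
Now the \emph{hypothesis} to be contradicted, namely $\mathrm{Hom}(M,\Sigma^{nd}(M))=0$, says $M\in\mathcal{S}$; closure then gives $\langle M\rangle\subseteq\mathcal{S}$, hence $\Omega^{nd}(M)\in\mathcal{S}$ by Lemma~\ref{lem:n}. But $\mathrm{Hom}(M,\Sigma^{nd}\Omega^{nd}(M))\simeq\mathrm{Hom}(M,M)\neq 0$ by Lemma~\ref{lem:sing} and infinite projective dimension --- contradiction. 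The negative-$n$ case is the dual, using $\mathcal{S}'=\{Y:\mathrm{Hom}(\Sigma^{nd}(Y),M)=0\}$. No ``structural unwinding'' is needed.
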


\begin{proof}
In view of Lemma~\ref{lem:n}, it suffices to prove that ${\rm Hom}_{\mathbf{D}_{\rm sg}(\mathcal{A})}(M, \Sigma^d(M))\neq 0\neq {\rm Hom}_{\mathbf{D}_{\rm sg}(\mathcal{A})}(M, \Sigma^{-d}(M))$.

Since $M$ has infinite projective dimension, it does not vanish in $\mathbf{D}_{\rm sg}(\mathcal{A})$. Consequently, we have ${\rm Hom}_{\mathbf{D}_{\rm sg}(\mathcal{A})}(M, M)\neq 0$. By Lemma~\ref{lem:sing}, we have an isomorphism  $M\simeq \Sigma^d\Omega^d(M)$ in $\mathbf{D}_{\rm sg}(\mathcal{A})$. In particular, we have
\begin{align}\label{equ:M}
{\rm Hom}_{\mathbf{D}_{\rm sg}(\mathcal{A})}(M, \Sigma^d\Omega^d(M))\neq 0.
\end{align}

We claim that ${\rm Hom}_{\mathbf{D}_{\rm sg}(\mathcal{A})}(M, \Sigma^d(M))\neq 0$. Otherwise, the object $M$ belongs to the following full subcategory
$$\mathcal{S}=\{X\in \mathcal{A}\; |\;{\rm Hom}_{\mathbf{D}_{\rm sg}(\mathcal{A})}(M, \Sigma^d(X))=0\}.$$
As any short exact sequence in $\mathcal{A}$ induces an exact triangle in $\mathbf{D}_{\rm sg}(\mathcal{A})$, it follows that $\mathcal{S}$ is closed under extensions. Clearly, it contains $\mathcal{P}$ and is closed under direct summands. It follows that $\mathcal{S}$ contains $\langle M\rangle$. Since $M$ is virtually $d$-periodic, we have that $\Omega^d(M)$ belongs to $\langle M\rangle$ and thus is contained in $\mathcal{S}$. This contradicts to the inequality (\ref{equ:M}), and proves the claim.

 Dually, we observe that
 $$
{\rm Hom}_{\mathbf{D}_{\rm sg}(\mathcal{A})}( \Sigma^d\Omega^d(M), M)\neq 0.
$$
Consider the full subcategory
$$\mathcal{S}'=\{Y\in \mathcal{A}\; |\;{\rm Hom}_{\mathbf{D}_{\rm sg}(\mathcal{A})}(\Sigma^d(Y), M)=0\},$$
which contains $\mathcal{P}$ and is closed under direct summands and extensions. By a dual argument as above, we prove that
$${\rm Hom}_{\mathbf{D}_{\rm sg}(\mathcal{A})}(\Sigma^d(M), M)\neq 0.$$
This completes the whole proof.
\end{proof}

\section{Two consequences}

In this section, we draw two consequences of Theorem~\ref{thm}. We show that the singularity category of a left artinian ring with infinite global dimension does not have a silting subcategory;  see Corollary~\ref{cor:silt}. We verify the Singular Presilting Conjecture for two classes of artin algebras: singularly-minimal algebras and ultimately-closed  algebras; see Propositions~\ref{prop:sin-min} and~\ref{prop:SPC}. We prove that the dg Leavitt algebra \cite{CW} associated to any artin algebra with infinite global dimension has a non-vanishing cohomology in each degree; see Proposition~\ref{prop:Leavitt}.

\subsection{Silting subcategories}

Let $\mathcal{T}$ be a triangulated category. Recall from \cite[Definition~2.1]{AI} that a full additive subcategory $\mathcal{M}$ is called \emph{silting}, provided that the following two conditions are fulfilled.
\begin{enumerate}
\item The subcategory $\mathcal{M}$ is \emph{presilting}, that is, ${\rm Hom}_\mathcal{T}(M, \Sigma^n(M))=0$ for any $M\in \mathcal{M}$ and $n>0$.
\item The subcategory $\mathcal{M}$ \emph{generates} $\mathcal{T}$ in the sense that $\mathcal{T}$ itself is the smallest thick triangulated subcategory containing $\mathcal{M}$.
\end{enumerate}
An object $X$ is  called presilting (respectively, silting) provided that ${\rm add}\; X$ is a presilting (respectively, silting) subcategory. We mention that the study of silting objects goes back to \cite{KV}.

The following result is due to \cite[Proposition~2.4]{AI}.

\begin{lem}\label{lem:AI}
Assume that $\mathcal{T}$ has a silting subcategory. Then for any object $X$, ${\rm Hom}_\mathcal{T}(X, \Sigma^d(X))=0$ for sufficiently large $d$. \hfill $\square$
\end{lem}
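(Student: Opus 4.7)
The plan is a two-step d\'evissage along the generating condition, bootstrapping from the presilting condition. Let $\mathcal{M}$ be a silting subcategory of $\mathcal{T}$ and fix $X\in \mathcal{T}$. First I would upgrade the presilting condition to all pairs in $\mathcal{M}$: since $\mathcal{M}$ is a full additive subcategory, applying ${\rm Hom}_\mathcal{T}(M\oplus M',\Sigma^n(M\oplus M'))=0$ for $n>0$ and extracting components yields ${\rm Hom}_\mathcal{T}(M,\Sigma^n(M'))=0$ for all $M,M'\in \mathcal{M}$ and all $n>0$.

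Next comes the inner d\'evissage on the first argument. For each fixed $M\in \mathcal{M}$, consider the full subcategory
$$\mathcal{V}_M=\set{Z\in \mathcal{T}\;|\;{\rm Hom}_\mathcal{T}(Z,\Sigma^d(M))=0 \text{ for all sufficiently large }d},$$
where the bound is allowed to depend on $Z$. By the upgraded presilting property, every object of $\mathcal{M}$ lies in $\mathcal{V}_M$ with bound $1$. Closure of $\mathcal{V}_M$ under direct summands is immediate; closure under $\Sigma^{\pm 1}$ is a matter of translating the bound; and closure under cones follows from applying ${\rm Hom}_\mathcal{T}(-,\Sigma^d(M))$ to any exact triangle, since vanishing at two vertices forces vanishing at the third for all $d$ beyond the maximum of the two bounds. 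Hence $\mathcal{V}_M$ is a thick triangulated subcategory containing $\mathcal{M}$, so the generating condition forces $\mathcal{V}_M=\mathcal{T}$. In particular, $X\in \mathcal{V}_M$ for every $M\in \mathcal{M}$.

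The outer d\'evissage now symmetrizes this on the target. Set
$$\mathcal{U}=\set{Y\in \mathcal{T}\;|\;{\rm Hom}_\mathcal{T}(X,\Sigma^d(Y))=0 \text{ for all sufficiently large }d}.$$
The content of the previous paragraph is precisely that $\mathcal{M}\subseteq \mathcal{U}$, and an identical thickness check (now using ${\rm Hom}_\mathcal{T}(X,-)$ in place of ${\rm Hom}_\mathcal{T}(-,\Sigma^d(M))$) shows $\mathcal{U}$ is thick. A second appeal to the generating condition gives $\mathcal{U}=\mathcal{T}$, and specializing to $Y=X$ yields the desired conclusion. I do not foresee any genuine obstacle; the only thing to watch is the bookkeeping on the threshold, which depends on the objects in play and must be translated under $\Sigma^{\pm 1}$ and taken to a maximum in the cone step.
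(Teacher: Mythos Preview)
Your argument is correct. The paper itself supplies no proof of this lemma---it simply attributes the result to \cite[Proposition~2.4]{AI} and closes with a $\square$---so there is nothing substantive to compare against; your two-stage d\'evissage via thick subcategories is the standard route to this kind of statement and goes through without difficulty.
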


In what follows, $\mathcal{A}$ is an abelian category with enough projective objects. We have the first consequence of Theorem~\ref{thm}.

\begin{prop}\label{prop:silt}
Assume that $\mathcal{A}$ contains a virtually $d$-periodic object for some $d\geq 1$. Then $\mathbf{D}_{\rm sg}(\mathcal{A})$ has no silting subcategory.
\end{prop}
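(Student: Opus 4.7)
The plan is to derive a contradiction by combining Theorem~\ref{thm} with Lemma~\ref{lem:AI}. The former provides an infinite supply of non-vanishing Hom groups at degrees going to $+\infty$, while the latter forces such Hom groups to vanish eventually whenever a silting subcategory is present.

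Concretely, I would argue by contradiction: assume that $\mathbf{D}_{\rm sg}(\mathcal{A})$ admits a silting subcategory, and let $M$ be the given virtually $d$-periodic object in $\mathcal{A}$. Viewing $M$ as an object of $\mathbf{D}_{\rm sg}(\mathcal{A})$, Lemma~\ref{lem:AI} applied to $X=M$ produces an integer $N$ such that
\[
{\rm Hom}_{\mathbf{D}_{\rm sg}(\mathcal{A})}(M, \Sigma^{m}(M))=0 \quad \text{for all } m\geq N.
\]
On the other hand, Theorem~\ref{thm} yields
\[
{\rm Hom}_{\mathbf{D}_{\rm sg}(\mathcal{A})}(M, \Sigma^{nd}(M))\neq 0 \quad \text{for every integer } n.
\]
Choosing $n$ large enough so that $nd\geq N$ produces the desired contradiction.

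There is essentially no obstacle: the two inputs have been prepared precisely for this application, so the proof is a one-line invocation. The only minor subtlety worth flagging is that Lemma~\ref{lem:AI} is stated for arbitrary objects of the triangulated category, so its applicability to $M\in \mathbf{D}_{\rm sg}(\mathcal{A})$ is immediate; and that virtual $d$-periodicity requires infinite projective dimension, ensuring $M\neq 0$ in $\mathbf{D}_{\rm sg}(\mathcal{A})$, though this is not even needed for the contradiction since the non-vanishing of the Hom groups is already guaranteed by Theorem~\ref{thm}.
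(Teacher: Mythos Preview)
Your proposal is correct and follows exactly the paper's approach: invoke Theorem~\ref{thm} to obtain non-vanishing of ${\rm Hom}_{\mathbf{D}_{\rm sg}(\mathcal{A})}(M, \Sigma^{nd}(M))$ for arbitrarily large $n$, and then use Lemma~\ref{lem:AI} to conclude that no silting subcategory can exist. The paper's version is phrased slightly more tersely (contrapositively rather than by contradiction), but the argument is identical.
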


\begin{proof}
Take $M$ be a virtually $d$-periodic object in $\mathcal{A}$.  Theorem~\ref{thm} implies that
$${\rm Hom}_{\mathbf{D}_{\rm sg}(\mathcal{A})}(M, \Sigma^{nd}(M))\neq 0$$
for any integer $n$. In particular, $n$ can be sufficiently large. In view of Lemma~\ref{lem:AI}, we have the required non-existence of a silting subcategory.
\end{proof}

For a left noetherian ring $\Lambda$, we usually write $\mathbf{D}_{\rm sg}(\Lambda)$ for $\mathbf{D}_{\rm sg}(\Lambda\mbox{-mod})$.

The following result strengthens \cite[Theorem~1]{AHMW}, where the corresponding result is proved under a finiteness assumption on the selfinjective dimension; compare \cite[Example~2.5(b)]{AI} and \cite[Corollary~3.12]{IY}. The argument here is completely different.

\begin{cor}\label{cor:silt}
Let $\Lambda$ be a left artinian ring with infinite global dimension. Then $\mathbf{D}_{\rm sg}(\Lambda)$ has no silting subcategory.
\end{cor}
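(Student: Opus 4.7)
The plan is to reduce the corollary directly to Proposition~\ref{prop:silt} by exhibiting a virtually periodic object in $\Lambda\mbox{-mod}$. Since $\Lambda$ is left artinian, the module category $\Lambda\mbox{-mod}$ has enough projective objects, so the framework of Section~2 applies.

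First I would produce the required virtually periodic object. This is exactly what Example~\ref{exm:artin} provides: setting $\Lambda_0=\Lambda/J$, where $J$ is the Jacobson radical, the assumption that $\Lambda$ has infinite global dimension forces the projective dimension of $\Lambda_0$ to be infinite, and the first syzygy $\Omega(\Lambda_0)$, being a submodule of a finitely generated free module killed by some power of $J$, is an iterated extension of simple modules, hence lies in $\langle \Lambda_0\rangle$. Thus $\Lambda_0$ is virtually $1$-periodic in $\Lambda\mbox{-mod}$.

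Having this object in hand, I would invoke Proposition~\ref{prop:silt} with $\mathcal{A}=\Lambda\mbox{-mod}$ and $d=1$ to conclude that $\mathbf{D}_{\rm sg}(\Lambda)=\mathbf{D}_{\rm sg}(\Lambda\mbox{-mod})$ admits no silting subcategory. There is no real obstacle here: the work has already been done upstream in Theorem~\ref{thm} (the non-vanishing of $\mathrm{Hom}_{\mathbf{D}_{\rm sg}(\mathcal{A})}(\Lambda_0,\Sigma^{n}(\Lambda_0))$ for all integers $n$) and in Lemma~\ref{lem:AI} (which shows such persistent non-vanishing is incompatible with the existence of a silting subcategory). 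The corollary is essentially a one-line specialization of Proposition~\ref{prop:silt} to the canonical virtually $1$-periodic module $\Lambda/J$.
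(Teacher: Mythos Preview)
Your proposal is correct and matches the paper's own proof essentially verbatim: cite Example~\ref{exm:artin} to see that $\Lambda_0=\Lambda/J$ is virtually $1$-periodic, then apply Proposition~\ref{prop:silt}. The extra commentary you give (enough projectives, the mechanism via Theorem~\ref{thm} and Lemma~\ref{lem:AI}) is accurate but already absorbed into the cited results.
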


\begin{proof}
By Example~\ref{exm:artin}, the semisimple $\Lambda$-module $\Lambda_0$ is virtually $1$-periodic. Then we apply Proposition~\ref{prop:silt}.
\end{proof}

The above non-existence partially supports the following conjecture \cite{CHQW}; compare \cite{IY}.

\vskip 5pt

 \emph{Singular Presilting Conjecture.} For any artin algebra $\Lambda$, there is no nonzero presilting subcategory in $\mathbf{D}_{\rm sg}(\Lambda)$.

\vskip 5pt

By \cite[Lemma~3.4]{IY} or  \cite[Proposition~1.21]{Orl},  this conjecture implies the following well-known conjecture, proposed in \cite[p.70]{AR75};  compare \cite[Section~1]{CHQW}.
 \vskip 5pt

 \emph{Auslander-Reiten Conjecture.} For a non-projective module $M$ over any artin algebra $\Lambda$, we have ${\rm Ext}^n_\Lambda(M, M\oplus \Lambda)\neq 0$ for some $n\geq 1$.

\vskip 5pt
We mention that, by \cite[Theorem~4.4]{Buc}, the Singular Presilting Conjecture for $\Lambda$ is equivalent to  the Auslander-Reiten Conjecture for $\Lambda$, provided that  $\Lambda$ is a Gorenstein artin algebra.

We say that a left artinian ring $\Lambda$ is \emph{singularly-minimal}, if any thick subcategory of $\mathbf{D}_{\rm sg}(\Lambda)$ is equal to either zero or $\mathbf{D}_{\rm sg}(\Lambda)$ itself.  For example, by \cite[Example~3.11]{Chen} the algebra $k[x_1, \cdots, x_n]/{(x_ix_j, 1\leq i, j\leq n)}$ over a field $k$ is singularly-minimal. For more such examples of trivial extension algebras, we refer to \cite[Corollary~4.3]{Chen16}.

The following result shows that the Singular Presilting Conjecture and thus the Auslander-Reiten Conjecture hold for singularly-minimal artin algebras.

\begin{prop}\label{prop:sin-min}
Let $\Lambda$ be a singularly-minimal left artianian ring. Then there is no nonzero presilting subcategory in $\mathbf{D}_{\rm sg}(\Lambda)$.
\end{prop}

\begin{proof}
By the singularly-minimality of $\Lambda$, any nonzero presilting subcategory in $\mathbf{D}_{\rm sg}(\Lambda)$ is silting. Then we apply Corollary~\ref{cor:silt}.
\end{proof}

The following result implies that the Singular Presilting Conjecture holds for ultimately-closed artin algebras; compare \cite[Proposition~1.3]{AR75}. Recall that finite dimensional periodic algebras and syzygy-finite artin algebras are ultimately-closed; see Section~2 or \cite[p.73]{AR75}.

\begin{prop}\label{prop:SPC}
Assume that $\mathcal{A}$ is ultimately-closed. Then $\mathbf{D}_{\rm sg}(\mathcal{A})$ has no nonzero presilting subcategory. In particular, for a ultimately-closed ring $\Lambda$, there is no nonzero presilting subcategory in $\mathbf{D}_{\rm sg}(\Lambda)$.
\end{prop}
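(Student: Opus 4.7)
The plan is to feed the virtually $1$-periodic object produced by the ultimately-closed hypothesis into Theorem~\ref{thm}, after first reducing an arbitrary nonzero presilting object to a shifted module.

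Suppose, for contradiction, that $\mathbf{D}_{\rm sg}(\mathcal{A})$ admits a nonzero presilting subcategory, and pick any nonzero $X$ in it; then $X$ itself satisfies ${\rm Hom}_{\mathbf{D}_{\rm sg}(\mathcal{A})}(X,\Sigma^n(X))=0$ for all $n>0$. Next I would invoke the standard fact that every object of $\mathbf{D}_{\rm sg}(\mathcal{A})$ is isomorphic to $\Sigma^k(M)$ for some $M\in \mathcal{A}$ and $k\in \mathbb{Z}$; this is obtained by iterating Lemma~\ref{lem:sing} (and its cosyzygy dual, available because $\mathcal{A}$ has enough projectives) to strip a bounded complex down to a stalk. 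Since $\Sigma$ is an auto-equivalence, the presilting property descends from $X$ to $M$, and since $X\ne 0$ in $\mathbf{D}_{\rm sg}(\mathcal{A})$ the module $M$ must have infinite projective dimension.

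Now apply the ultimately-closed hypothesis to $M$: by Example~\ref{exm:uc} there exists $d\geq 1$ such that
\[
N\;:=\;M\oplus \Omega(M)\oplus \cdots \oplus \Omega^{d-1}(M)
\]
is virtually $1$-periodic (its infinite projective dimension is inherited from the summand $M$). Theorem~\ref{thm} then yields ${\rm Hom}_{\mathbf{D}_{\rm sg}(\mathcal{A})}(N,\Sigma^n(N))\neq 0$ for every integer $n$. Iterating Lemma~\ref{lem:sing} gives $\Omega^i(M)\simeq \Sigma^{-i}(M)$ in $\mathbf{D}_{\rm sg}(\mathcal{A})$, so the Hom decomposes as
\[
{\rm Hom}_{\mathbf{D}_{\rm sg}(\mathcal{A})}(N,\Sigma^n(N))\;\simeq\;\bigoplus_{i,j=0}^{d-1}{\rm Hom}_{\mathbf{D}_{\rm sg}(\mathcal{A})}\bigl(M,\Sigma^{n+i-j}(M)\bigr).
\]
Choosing $n=d$ forces every exponent $n+i-j$ into the range $[1,2d-1]$, so every summand on the right vanishes by the presilting property of $M$, contradicting the non-vanishing on the left.

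The ``in particular'' clause then follows by taking $\mathcal{A}=\Lambda\mbox{-mod}$. The one step that deserves real care is the shift-reduction of $X$ to a module, since the presilting hypothesis only controls positive shifts whereas Theorem~\ref{thm} produces non-vanishing in all degrees; the combinatorial trick making everything work is the choice $n=d$, which is large enough to push the entire symmetric range $[n-(d-1),n+(d-1)]$ into the positive region where the presilting vanishing applies.
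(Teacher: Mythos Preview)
Your argument is correct and shares the paper's skeleton: reduce a nonzero presilting object $X$ to a shifted module $M$, form the virtually $1$-periodic object $N=M\oplus\Omega(M)\oplus\cdots\oplus\Omega^{d-1}(M)$ from Example~\ref{exm:uc}, and invoke Theorem~\ref{thm}. The endgame, however, is genuinely different. The paper passes to the thick subcategory $\mathcal{T}$ generated by $X$, observes that $X$ is \emph{silting} in $\mathcal{T}$ and that $N\in\mathcal{T}$, and then cites Lemma~\ref{lem:AI} (the Aihara--Iyama result that any object in a category with a silting subcategory has vanishing self-Homs in large degree) to contradict the non-vanishing from Theorem~\ref{thm}. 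You instead bypass Lemma~\ref{lem:AI} entirely: using $\Omega^i(M)\simeq\Sigma^{-i}(M)$ you expand ${\rm Hom}(N,\Sigma^d(N))$ as a finite sum of groups ${\rm Hom}(M,\Sigma^m(M))$ with $1\le m\le 2d-1$, each of which vanishes by the presilting property of $M$. Your route is more self-contained and makes the contradiction completely explicit; the paper's route is shorter to state but relies on the external silting lemma.

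One small imprecision: the reduction of an arbitrary object of $\mathbf{D}_{\rm sg}(\mathcal{A})$ to a shifted module is not really obtained by ``iterating Lemma~\ref{lem:sing} and its cosyzygy dual'' (there is no cosyzygy available from enough projectives alone). The correct argument replaces a bounded complex by a brutally truncated projective resolution, leaving a single non-projective term; the paper simply cites \cite[Lemma~2.1]{Chen} for this. This does not affect the validity of your proof, only the justification of that step.
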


\begin{proof}
It suffices to prove that  $\mathbf{D}_{\rm sg}(\mathcal{A})$ has no nonzero presilting object. Assume that $X$ is a nonzero presilting object in $\mathbf{D}_{\rm sg}(\mathcal{A})$. Set $\mathcal{T}$ to be the smallest thick subcategory containing $X$.  By \cite[Lemma~2.1]{Chen}, there exists an object $M$ in $\mathcal{A}$ such that $X$ is isomorphic to $\Sigma^n(M)$ for some integer $n$.  Since $M$ is ultimately-closed, there exists $d\geq 1$ such that $E=M\oplus \Omega(M)\oplus \cdots\oplus \Omega^{d-1}(M)$ is virtually $1$-periodic; see Example~\ref{exm:uc}. In view of Lemma~\ref{lem:sing}, the object $E$ belongs to $\mathcal{T}$. However, by Theorem~\ref{thm}, we have
$${\rm Hom}_{\mathcal{T}}(E, \Sigma^{n}(E))\neq 0$$
for any integer $n$. Since $X$ is a silting object of $\mathcal{T}$, we have a desired contradiction by Lemma~\ref{lem:AI}.
\end{proof}

\begin{rem}\label{rem:vuc}
It seems that Proposition~\ref{prop:SPC} might be strengthened. The abelian category $\mathcal{A}$ is said be  \emph{virtually ultimately-closed} if for each object $M$, there exists $d\geq 1$ such that $\Omega^d(M)$ lies in $\langle M\oplus \Omega(M)\oplus \cdots\oplus \Omega^{d-1}(M)\rangle$. In this case, the object $M\oplus \Omega(M)\oplus \cdots\oplus \Omega^{d-1}(M)$ is still virtually $1$-periodic. Then the same argument above implies that Proposition~\ref{prop:SPC} holds for virtually ultimately-closed categories. However, we do not know any virtually ultimately-closed category, which is not ultimately-closed.
\end{rem}

\begin{rem}
 Unlike the ungraded case, the graded singularity category of a graded artin algebra might have a silting object;  for example, see \cite[Theorem~3.0.3]{LZ} and \cite[Theorem~1.4]{KMY}.
\end{rem}

We mention the following known non-existence result.

\begin{prop}
Let $R$ be a commutative noetherian  local ring, which is non-regular. Then  $\mathbf{D}_{\rm sg}(R)$ has no silting subcategory.
\end{prop}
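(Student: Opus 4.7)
I plan to invoke Lemma~\ref{lem:AI} by exhibiting an object $X$ in $\mathbf{D}_{\rm sg}(R)$ with ${\rm Hom}_{\mathbf{D}_{\rm sg}(R)}(X, \Sigma^n X)\neq 0$ for arbitrarily large $n$. The natural candidate is the residue field $k=R/\mathfrak{m}$: it is non-zero in $\mathbf{D}_{\rm sg}(R)$ because non-regularity of $R$ is equivalent, by the Auslander-Buchsbaum-Serre theorem, to the projective dimension of $k$ being infinite. When $R$ happens to be Artinian, then the commutative-ring analogue of Example~\ref{exm:artin} applies verbatim to give that $k$ is virtually $1$-periodic, and Proposition~\ref{prop:silt} concludes.

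For the general, possibly non-Artinian case, I would first reduce to $R$ complete local: the $\mathfrak{m}$-adic completion $\widehat{R}$ is still commutative noetherian local and non-regular, the completion functor $\widehat{R}\otimes_R-$ is flat, preserves syzygies of finitely generated modules, and fixes the residue field $k$, so a silting subcategory of $\mathbf{D}_{\rm sg}(R)$ would push forward to one in $\mathbf{D}_{\rm sg}(\widehat{R})$. In the complete case, Cohen's structure theorem writes $\widehat{R}=S/I$ with $S$ a regular local ring and $I\neq 0$, and the classical theorem of Gulliksen (compare \cite{AV}) then gives that the Betti numbers $\beta^{\widehat{R}}_n(k)$ are strictly positive for every $n\geq 0$; equivalently, ${\rm Ext}^n_{\widehat{R}}(k,k)\neq 0$ for all $n$.

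The remaining step, which I expect to be the main obstacle, is to transfer this non-vanishing of classical Ext into non-vanishing of ${\rm Hom}_{\mathbf{D}_{\rm sg}(\widehat{R})}(k, \Sigma^n k)$, that is, to control the classes whose representing morphism $k\to \Sigma^n k$ in $\mathbf{D}^b(\widehat{R}\mbox{-mod})$ factors through a perfect complex. A direct approach is to pick a representative of a chosen Ext-class inside the minimal free resolution $F_\bullet\to k$: minimality forces every differential to land in $\mathfrak{m}F_{\bullet-1}$, and a Nakayama-style argument then shows that any factorization of the representing morphism through a perfect complex would force the chosen class to vanish modulo $\mathfrak{m}$, contradicting positivity of $\beta^{\widehat{R}}_n(k)$. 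A safer alternative would be to reduce, via $\widehat{R}=S/I$ and a further quotient by a regular sequence, to a hypersurface or to the finite Cohen-Macaulay-type situation covered by Example~\ref{exm:CM}, where an honest virtually periodic module is available and Proposition~\ref{prop:silt} applies directly.
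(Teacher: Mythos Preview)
Your overall strategy---apply Lemma~\ref{lem:AI} to the residue field $k$---is exactly what the paper does. The difference is that the paper does not attempt to manufacture the required non-vanishing from scratch: it simply quotes \cite[Theorem~6.5]{AV}, which already asserts that the \emph{stable} cohomology groups $\widehat{\rm Ext}^n_R(k,k)$ are nonzero for every $n\in\mathbb{Z}$ whenever $R$ is non-regular, together with the standard identification $\widehat{\rm Ext}^n_R(k,k)\cong {\rm Hom}_{\mathbf{D}_{\rm sg}(R)}(k,\Sigma^n k)$ from \cite[1.4.2]{AV}. That is the whole proof.

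Your proposal, by contrast, only secures the non-vanishing of \emph{classical} ${\rm Ext}^n_R(k,k)$ and then tries to push this into the singularity category. That passage is precisely the nontrivial content of the Avramov--Veliche theorem, and neither of your sketches closes the gap. The completion detour is both unnecessary (positivity of the Betti numbers of $k$ over a non-regular local ring needs no completeness) and unjustified as stated: the image of a silting subcategory under $\widehat{R}\otimes_R-$ has no reason to generate $\mathbf{D}_{\rm sg}(\widehat{R})$, so you do not get a silting subcategory on the other side. Your ``direct'' Nakayama argument is too vague---the issue is not whether a single Ext-class survives, but whether the canonical map ${\rm Ext}^n_R(k,k)\to{\rm Hom}_{\mathbf{D}_{\rm sg}(R)}(k,\Sigma^n k)$ has nonzero image, and morphisms in the quotient category are colimits over all roofs, not just those coming from the minimal resolution. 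Finally, the proposed reduction to hypersurfaces or to finite CM-type via a regular sequence is not available for an arbitrary non-regular complete local ring, so Example~\ref{exm:CM} and Proposition~\ref{prop:silt} cannot be invoked in general.

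In short: you identified the right object and the right lemma, but the step you flagged as the ``main obstacle'' is exactly the theorem the paper cites rather than reproves.
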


\begin{proof}
Denote by $k$ the residue field of $R$. By \cite[Theorem~6.5]{AV}, we infer that
\begin{align}\label{equ:comm.ring}
{\rm Hom}_{\mathbf{D}_{\rm sg}(R)}(k, \Sigma^n(k))\neq 0
 \end{align}
 for any integer $n$. Here, we identify ${\rm Hom}_{\mathbf{D}_{\rm sg}(R)}(k, \Sigma^n(k))$ with the stable cohomology group $\widehat{\rm Ext}^n_R(k, k)$; see \cite[1.4.2]{AV}. Then the non-existence follows from Lemma~\ref{lem:AI}.
\end{proof}

\subsection{The dg Leavitt algebra}\label{subsec:3.2}

In this subsection, we assume that  $\Lambda$ is an artin algebra over a commutative artinian ring $k$.

 Recall that $\Lambda_0 = \Lambda/J$ with $J$ its Jacobson radical.  We will assume that $\Lambda_0$ is a subalgebra of $\Lambda$ with a decomposition $\Lambda=\Lambda_0\oplus J$ of  $\Lambda_0$-$\Lambda_0$-bimodules.  This assumption holds if $\Lambda$ is given by a finite quiver with admissible relations, or if $\Lambda$ is a finite dimensional algebra  over a perfect field.

Consider the left $\Lambda_0$-dual $J^*={\rm Hom}(J, \Lambda_0)$ of $J$, which carries a natural $\Lambda_0$-$\Lambda_0$-bimodule structure. We have the \emph{Casimir element} $c=\sum_{i\in S} \alpha_i^*\otimes \alpha_i\in J^*\otimes J$, where $\{\alpha_i\; |\; i\in S\}$ and $\{\alpha_i^*\; |\; i\in S\}$ form the dual basis of $J$.  The multiplication on $J$ induces a map of $\Lambda_0$-$\Lambda_0$-bimodules
$$\partial_+\colon J^*\longrightarrow J^*\otimes  J^*.$$
To be more precise, we have  $\partial_+(g)=\sum g_1\otimes g_2$ such that $g(ab)=\sum g_2(ag_1(b))$ for any $a, b\in J$.

Associated to the artin algebra $\Lambda$,  the \emph{dg Leavitt algebra} $L=L_{\Lambda_0}(J)$ is introduced in \cite{CW}. As an algebra, it is given by
$$L=T_{\Lambda_0}(J\oplus J^*)/(a\otimes g-g(a),\; 1-c \; |\; a\in J, g\in J^*).$$
Here, $T_{\Lambda_0}(J\oplus J^*)$ denotes the tensor algebra. It is naturally $\mathbb{Z}$-graded such that $|e|=0$ for any $e\in \Lambda_0$, $|a|=-1$ for any $a\in J$ and $|g|=1$ for any $g\in  J^*$. The  differential $\partial$ on $L$ is uniquely determined by the graded Leibniz rule and the conditions that $\partial|_{\Lambda_0}=0$ and $\partial|_{J^*}=\partial_+$; see \cite[Remark~3.6]{CW}. We mention that the classical Leavitt algebras appear already in \cite{Lea}.

For any dg algebra $A$, we denote by $H^*(A)=\oplus_{n\in\mathbb{Z}} H^n(A)$ its total cohomology, which inherits a graded algebra structure from $A$. Recall that $A$ is \emph{acyclic} if $H^*(A)=0$, which is equivalent to the condition that $H^0(A)=0$.

We view $\Lambda_0$ as the corresponding stalk complex concentrated in degree zero, and as an object in $\mathbf{D}_{\rm sg}(\Lambda)$. Then the following graded $k$-module
\begin{align}\label{equ:Hom}
\bigoplus_{n\in \mathbb{Z}} {\rm Hom}_{\mathbf{D}_{\rm sg}(\Lambda)} (\Lambda_0, \Sigma^n(\Lambda_0))
\end{align}
becomes a graded $k$-algebra, whose multiplication is induced by composition of morphisms in $\mathbf{D}_{\rm sg}(\Lambda)$.

The following result is implicitly contained in \cite{CW}, and indicates the intimate link between  dg Leavitt algebras and singularity categories.

\begin{lem}\label{lem:L}
Keep the notation as above. Then there is an isomorphism of graded algebras
$$H^*(L)^{\rm op}\simeq \bigoplus_{n\in \mathbb{Z}} {\rm Hom}_{\mathbf{D}_{\rm sg}(\Lambda)} (\Lambda_0, \Sigma^n(\Lambda_0)),$$
where $H^*(L)^{\rm op}$ denotes the opposite algebra of $H^*(L)$.
\end{lem}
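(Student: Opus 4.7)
The plan is to realize $L^{\rm op}$ as the cohomology of the dg endomorphism algebra of an explicit dg-model of $\Lambda_0$ furnished by \cite{CW}, and then identify that cohomology with the graded Hom algebra in $\mathbf{D}_{\rm sg}(\Lambda)$ via the standard dg-enhancement formalism. The qualification \emph{implicitly} in the statement reflects the fact that the dg Leavitt algebra is defined in \cite{CW} precisely so as to be such a compact model, while the resulting description of singular Hom is not spelled out there as a theorem.

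First I would recall from \cite{CW} the construction of an unbounded (Tate-type) dg $\Lambda$-module $X$, whose underlying graded module is built from tensor powers of $J$ in non-positive degrees and of $J^*$ in positive degrees. The non-positive part is a projective resolution of $\Lambda_0$ built from the bar complex of the augmentation ideal $J$, so $X$ represents $\Lambda_0$ in $\mathbf{D}_{\rm sg}(\Lambda)=\mathbf{D}^b(\Lambda\mbox{-mod})/\mathbf{K}^b(\mathcal{P})$; the positive part, assembled using the Casimir element $c$, provides a splicing that keeps the complex exact above degree zero and hence becomes zero in the Verdier quotient.

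The central step is to recognize $L$ as ${\rm End}_\Lambda(X)^{\rm op}$ at the level of dg algebras. Multiplication by $a\in J$ is a natural degree $-1$ endomorphism of $X$, while contraction against $g\in J^*$ (via the dual pairing supplied by $c$) is a natural degree $+1$ endomorphism. The relation $a\otimes g = g(a)$ is the tautological composition of a generator with a cogenerator; the relation $1 = c$ is the standard identity $\sum_i \alpha_i^*(\,\cdot\,)\,\alpha_i = {\rm id}$ encoded by the dual basis; and the formula $g(ab) = \sum g_2(a\,g_1(b))$ defining $\partial_+$ is exactly the commutator $[d_X,g] = \partial_+(g)$ of $g$ with the resolution differential. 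Once this identification is in place, the standard principle that for a cofibrant dg model $X$ of $Y\in\mathbf{D}_{\rm sg}(\Lambda)$ one has a graded-algebra isomorphism $H^*({\rm End}_\Lambda(X))\cong \bigoplus_n {\rm Hom}_{\mathbf{D}_{\rm sg}(\Lambda)}(Y,\Sigma^n Y)$, with product given by composition, supplies the conclusion; the ${\rm op}$ is the routine one that appears when turning right actions into compositions of morphisms.

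The main obstacle is the differential compatibility in the previous paragraph: one has to verify that the abstract $\partial_+$, defined from the $\Lambda_0$-bimodule structure of $J^*$ and the multiplication on $J$, really coincides with the commutator differential on ${\rm End}_\Lambda(X)$ for the specific dg model $X$. Once this is extracted from \cite{CW}, where it is the content of the motivation for the differential on $L$, the remaining bookkeeping — associativity, unit, and the opposite-algebra convention — is routine.
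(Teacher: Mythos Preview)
Your outline has the right architecture---realize $L$ as a dg endomorphism algebra and then read off cohomology as graded Hom---but the last step invokes a ``standard principle'' that is not standard and, as you have stated it, not correct. If $X$ is a dg $\Lambda$-module whose non-positive truncation is a projective resolution of $\Lambda_0$ and which is exact in positive degrees, then $X$ is quasi-isomorphic to $\Lambda_0$ in $\mathbf{D}(\Lambda)$, and for a K-projective such $X$ one has $H^n({\rm End}_\Lambda(X))\simeq {\rm Hom}_{\mathbf{D}(\Lambda)}(\Lambda_0,\Sigma^n\Lambda_0)={\rm Ext}^n_\Lambda(\Lambda_0,\Lambda_0)$, i.e.\ \emph{derived} Hom, not singular Hom. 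These differ: the Ext groups vanish in negative degrees, whereas by Theorem~\ref{thm} the singular Hom groups are nonzero in every degree. There is no general notion of a ``cofibrant dg $\Lambda$-module model of an object of $\mathbf{D}_{\rm sg}(\Lambda)$'' whose ordinary dg endomorphisms compute singular Hom; the singularity category is a Verdier quotient, and passing to that quotient is invisible to the dg module category. (In the Gorenstein case one can instead use totally acyclic complexes of projectives, but your $X$ is not acyclic and $\Lambda$ is not assumed Gorenstein.) A secondary issue is that the components $J^{\otimes n}$ and $(J^*)^{\otimes n}$ you describe are not projective $\Lambda$-modules in general, so even the identification with derived Hom would need justification.

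The paper's proof handles exactly this point by working not with dg $\Lambda$-modules but with the \emph{singular Yoneda dg category} of \cite{CW}, which is constructed precisely to be a dg enhancement of $\mathbf{D}_{\rm sg}(\Lambda)$. One cites \cite[Theorem~9.5]{CW} to identify $H^*(L)^{\rm op}$ with the cohomology of the dg endomorphism algebra of $\Lambda_0$ in that dg category, and then the quasi-equivalence of \cite[Corollary~9.3]{CW} to identify the latter with the graded algebra $\bigoplus_n {\rm Hom}_{\mathbf{D}_{\rm sg}(\Lambda)}(\Lambda_0,\Sigma^n\Lambda_0)$. The content you are missing is the passage from the derived level to the singular level, and that passage is supplied by the singular Yoneda construction rather than by any single dg $\Lambda$-module.
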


\begin{proof}
By the isomorphism in \cite[Theorem~9.5]{CW}, we infer that $H^*(L)^{\rm op}$ is isomorphic to the total cohomology algebra of the dg endomorphism algebra of $\Lambda_0$ in the singular Yoneda dg category. By the quasi-equivalence in \cite[Corollary~9.3]{CW}, we infer that the latter algebra is isomorphic to (\ref{equ:Hom}).
\end{proof}

In general, the structure of the dg Leavitt algebra $L$ seems to be very complicated.  The following second consequence of Theorem~\ref{thm} is  a dichotomy on its total cohomology $H^*(L)$. We refer to Proposition~\ref{prop:Leavitt2} below for a strengthened version.

\begin{prop}\label{prop:Leavitt}
Let $\Lambda$ be an artin algebra with $L$ the associated dg Leavitt algebra. Then the following statements hold.
\begin{enumerate}
\item The dg Leavitt algebra $L$ is acyclic if and only if $\Lambda$ has finite global dimension.
\item If $\Lambda$ has infinite global dimension, then $H^n(L)\neq 0$ for any integer $n$.
\end{enumerate}
\end{prop}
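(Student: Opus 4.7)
The plan is to reduce everything, via Lemma~\ref{lem:L}, to the singularity-category side, and then to apply Theorem~\ref{thm} with the semisimple module $\Lambda_0$ playing the role of $M$. The key translation is that since taking opposite algebras reverses multiplication but does not change the underlying graded $k$-module, Lemma~\ref{lem:L} gives a $k$-module isomorphism
$$H^n(L)\simeq {\rm Hom}_{\mathbf{D}_{\rm sg}(\Lambda)}(\Lambda_0,\Sigma^n(\Lambda_0))\quad \text{for each } n\in\mathbb{Z}.$$
In particular, the non-vanishing of individual cohomology groups $H^n(L)$ is equivalent to the non-vanishing of the corresponding Hom spaces in $\mathbf{D}_{\rm sg}(\Lambda)$.

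For part (1), I would argue as follows. The decomposition $\Lambda=\Lambda_0\oplus J$ ensures that $\Lambda_0$ contains every simple $\Lambda$-module as a direct summand, so the projective dimension of $\Lambda_0$ equals the global dimension of $\Lambda$. If $\Lambda$ has finite global dimension, then $\Lambda_0$ lies in $\mathbf{K}^b(\mathcal{P})$ and is thus zero in $\mathbf{D}_{\rm sg}(\Lambda)$, so all the Hom spaces above vanish and $L$ is acyclic. Conversely, if $L$ is acyclic then, using that acyclicity is equivalent to $H^0(L)=0$ (as recalled in the excerpt), we get ${\rm End}_{\mathbf{D}_{\rm sg}(\Lambda)}(\Lambda_0)=0$; in particular the identity of $\Lambda_0$ vanishes, forcing $\Lambda_0\simeq 0$ in the singularity category, hence $\Lambda_0$ has finite projective dimension and $\Lambda$ finite global dimension.

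For part (2), assume $\Lambda$ has infinite global dimension. Example~\ref{exm:artin} states precisely that $\Lambda_0$ is then virtually $1$-periodic in $\Lambda\mbox{-mod}$. Applying Theorem~\ref{thm} with $d=1$ and $M=\Lambda_0$ yields
$${\rm Hom}_{\mathbf{D}_{\rm sg}(\Lambda)}(\Lambda_0,\Sigma^n(\Lambda_0))\neq 0 \quad \text{for every } n\in\mathbb{Z},$$
which via the isomorphism above gives $H^n(L)\neq 0$ for every integer $n$.

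There is no real obstacle here: both parts are immediate once Lemma~\ref{lem:L} and Theorem~\ref{thm} are in hand. The only points worth flagging in the write-up are (a) the observation that passage to the opposite algebra does not affect the non-vanishing of the underlying graded components, and (b) the identification of $\operatorname{pd}\Lambda_0$ with the global dimension of $\Lambda$, which ensures that the infinite/finite global dimension dichotomy is exactly detected by $\Lambda_0$ in the singularity category.
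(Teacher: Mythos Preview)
Your proof is correct and follows essentially the same route as the paper: both parts reduce via Lemma~\ref{lem:L} to the singularity-category Hom groups, with part~(2) handled by Example~\ref{exm:artin} and Theorem~\ref{thm} exactly as you do. In part~(1) the paper argues via ``$\Lambda_0$ generates $\mathbf{D}_{\rm sg}(\Lambda)$, so its vanishing forces the whole singularity category to vanish,'' whereas you argue directly that vanishing of $\Lambda_0$ in $\mathbf{D}_{\rm sg}(\Lambda)$ means $\Lambda_0$ has finite projective dimension; these are interchangeable standard facts (and note that the splitting $\Lambda=\Lambda_0\oplus J$ is not actually needed for the identification ${\rm pd}\,\Lambda_0={\rm gl.dim}\,\Lambda$, which follows already from $\Lambda_0=\Lambda/J$).
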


\begin{proof}
In view of Lemma~\ref{lem:L}, the dg Leavitt algebra $L$ is acyclic if and only if ${\rm Hom}_{\mathbf{D}_{\rm sg}(\Lambda)} (\Lambda_0, \Lambda_0)=0$, which is equivalent to the vanishing of $\Lambda_0$ in $\mathbf{D}_{\rm sg}(\Lambda)$. Since $\Lambda_0$ generates $\mathbf{D}_{\rm sg}(\Lambda)$, the last condition is equivalent to the vanishing of $\mathbf{D}_{\rm sg}(\Lambda)$, which is well known to be further equivalent to the finiteness of the global dimension of $\Lambda$. In summary, we infer (1).

For (2), we assume that $\Lambda$ has infinite global dimension.  By Example~\ref{exm:artin}, $\Lambda_0$ is virtually $1$-periodic. Theorem~\ref{thm} implies that
\begin{align}\label{equ:artin}
{\rm Hom}_{\mathbf{D}_{\rm sg}(\Lambda)} (\Lambda_0, \Sigma^n(\Lambda_0))\neq 0
\end{align}
for any integer $n$. Now the required statement follows from the isomorphism in Lemma~\ref{lem:L} immediately.
\end{proof}

\begin{rem}\label{rem:AV}
The inequality (\ref{equ:artin}) is analogous to (\ref{equ:comm.ring}). In the same spirit, Proposition~\ref{prop:Leavitt} is analogous to the characterization of regular local rings in \cite[Theorem~6.5]{AV} via the stable cohomology algebras of the residue fields.
\end{rem}

\section{The Hom-finiteness}

Let $k$ be a commutative ring. We will assume that the abelian category $\mathcal{A}$ is $k$-linear. Consequently, the singularity category $\mathbf{D}_{\rm sg}(\mathcal{A})$ is $k$-linear.  We study the Hom-finiteness of the singularity category,  and obtain a trichotomy on the cohomologies of the dg Leavitt algebras; see Proposition~\ref{prop:Leavitt2}.

For an object $M$ in $\mathcal{A}$ and $d\geq 1$, we consider a graded $k$-algebra
\begin{align}\label{equ:gamma}
\Gamma(M;d)=\bigoplus_{n\in \mathbb{Z}} {\rm Hom}_{\mathbf{D}_{\rm sg}(\mathcal{A})}(M, \Sigma^{nd}(M)),
\end{align}
whose multiplication is induced by composition of morphisms in $\mathbf{D}_{\rm sg}(\mathcal{A})$.

The proof of the following lemma resembles the one of Theorem~\ref{thm}.

\begin{lem}\label{lem:Hom-inf}
Let $d\geq 1$ and  $M$ be a virtually $d$-periodic object in $\mathcal{A}$. Assume that $X$ and $Y$ are objects in $\mathbf{D}_{\rm sg}(\mathcal{A})$. Then the following statements hold.
\begin{enumerate}
\item Assume that the $k$-module ${\rm Hom}_{\mathbf{D}_{\rm sg}(\mathcal{A})}(X, M)$ is of infinite length. Then so is ${\rm Hom}_{\mathbf{D}_{\rm sg}(\mathcal{A})}(X, \Sigma^{nd}(M))$ for each $n\geq 0$.
\item  Assume that the $k$-module ${\rm Hom}_{\mathbf{D}_{\rm sg}(\mathcal{A})}(M, Y)$ is of infinite length. Then so is ${\rm Hom}_{\mathbf{D}_{\rm sg}(\mathcal{A})}(\Sigma^{nd}(M), Y)$ for each $n\geq 0$.
\item Each homogeneous component of $\Gamma(M; d)$ is of infinite length if and only if so is one of the homogeneous components.
\end{enumerate}
\end{lem}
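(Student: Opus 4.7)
The plan is to adapt, nearly verbatim, the extension-closed subcategory argument used in the proof of Theorem~\ref{thm}, replacing the ``non-vanishing'' test by the ``of infinite length'' test.

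For part~(1), fix $n\geq 0$ (the case $n=0$ being trivial) and argue by contrapositive: assuming that ${\rm Hom}_{\mathbf{D}_{\rm sg}(\mathcal{A})}(X,\Sigma^{nd}(M))$ has finite length, I will introduce
$$\mathcal{S}=\{Z\in \mathcal{A}\; |\; {\rm Hom}_{\mathbf{D}_{\rm sg}(\mathcal{A})}(X,\Sigma^{nd}(Z)) \text{ is of finite length over }k\}$$
and show ${\rm Hom}_{\mathbf{D}_{\rm sg}(\mathcal{A})}(X,M)$ has finite length. The subcategory $\mathcal{S}$ contains $\mathcal{P}$ (projectives vanish in the singularity category) and is closed under direct summands by the additivity of Hom. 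For extension-closedness, a short exact sequence $0\to A\to B\to C\to 0$ in $\mathcal{A}$ induces an exact triangle in $\mathbf{D}_{\rm sg}(\mathcal{A})$, hence a three-term exact sequence of $k$-modules with outer terms of finite length; the standard subadditivity of length forces the middle term to have finite length.

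By hypothesis $M\in \mathcal{S}$, so $\langle M\rangle\subseteq \mathcal{S}$. By Lemma~\ref{lem:n}, $M$ is virtually $nd$-periodic, hence $\Omega^{nd}(M)\in \langle M\rangle\subseteq \mathcal{S}$, which gives that ${\rm Hom}_{\mathbf{D}_{\rm sg}(\mathcal{A})}(X,\Sigma^{nd}(\Omega^{nd}(M)))$ has finite length. Iterating Lemma~\ref{lem:sing} yields $\Sigma^{nd}(\Omega^{nd}(M))\simeq M$ in $\mathbf{D}_{\rm sg}(\mathcal{A})$, so ${\rm Hom}_{\mathbf{D}_{\rm sg}(\mathcal{A})}(X,M)$ has finite length, as needed. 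Part~(2) is fully dual: work with
$$\mathcal{S}'=\{Z\in \mathcal{A}\; |\; {\rm Hom}_{\mathbf{D}_{\rm sg}(\mathcal{A})}(\Sigma^{nd}(Z),Y) \text{ is of finite length over }k\}$$
and a symmetric three-term exact sequence argument.

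For part~(3), only the non-trivial implication needs treatment: if some homogeneous component ${\rm Hom}_{\mathbf{D}_{\rm sg}(\mathcal{A})}(M,\Sigma^{m_{0}d}(M))$ has infinite length, then every component does. Apply~(1) with $X:=\Sigma^{-m_{0}d}(M)$; the hypothesis becomes ${\rm Hom}_{\mathbf{D}_{\rm sg}(\mathcal{A})}(M,\Sigma^{m_{0}d}(M))$ of infinite length, and the conclusion gives infinite length for ${\rm Hom}_{\mathbf{D}_{\rm sg}(\mathcal{A})}(M,\Sigma^{(n+m_{0})d}(M))$ for all $n\geq 0$, covering every $m\geq m_{0}$. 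Symmetrically, applying~(2) with $Y:=\Sigma^{m_{0}d}(M)$ covers every $m\leq m_{0}$, completing~(3).

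The only substantive step is the extension-closedness of $\mathcal{S}$, but this is not really an obstacle: it is a routine combination of the passage from short exact sequences in $\mathcal{A}$ to exact triangles in $\mathbf{D}_{\rm sg}(\mathcal{A})$ with length subadditivity for three-term exact sequences of $k$-modules. Beyond this bookkeeping, the proof is a transcription of the strategy of Theorem~\ref{thm}, so I expect no new conceptual difficulty; the main care is in handling the index shifts in~(3) so that parts~(1) and~(2) together yield infinite length in both directions from $m_{0}$.
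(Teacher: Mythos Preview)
Your proposal is correct and follows essentially the same approach as the paper: both define the subcategory $\mathcal{S}=\{Z\in\mathcal{A}\mid {\rm Hom}_{\mathbf{D}_{\rm sg}(\mathcal{A})}(X,\Sigma^{nd}(Z))\text{ has finite length}\}$, observe it contains $\mathcal{P}$ and is closed under direct summands and extensions, and then use $\Omega^{nd}(M)\in\langle M\rangle$ together with $\Sigma^{nd}\Omega^{nd}(M)\simeq M$ to conclude. The only cosmetic differences are that the paper reduces to the case $n=1$ first (invoking Lemma~\ref{lem:n} to justify the reduction) and argues directly rather than by contrapositive, and that for (3) the paper simply says it ``follows immediately by combining (1) and (2)'' without spelling out the index shifts you carefully wrote down.
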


\begin{proof}
We will only give the proof of (1), as (2) is proved dually and (3) follows immediately by combining (1) and (2).

We now prove (1). Thanks to Lemma~\ref{lem:n},  it suffices to claim that the $k$-module ${\rm Hom}_{\mathbf{D}_{\rm sg}(\mathcal{A})}(X, \Sigma^{d}(M))$ is of infinite length. By Lemma~\ref{lem:sing}, $M$ is isomorphic to $\Sigma^d\Omega^d(M)$. Therefore, $\Omega^d(M)$ does not belong to the following full subcategory
$$\mathcal{S}''=\{Z\in \mathcal{A}\; |\; {\rm Hom}_{\mathbf{D}_{\rm sg}(\mathcal{A})}(X, \Sigma^d(Z)) \mbox{ is of finite length}\}.$$
We observe that $\mathcal{S}''$ contains $\mathcal{P}$ and is closed under direct summands and extensions. Since $\Omega^d(M)\in \langle M\rangle$ and $\Omega^d(M)$ does not belong to $\mathcal{S}''$, it follows that $M$ does not belong to $\mathcal{S}''$ either. This proves the claim and thus (1).
\end{proof}

We say that $\mathbf{D}_{\rm sg}(\mathcal{A})$ is \emph{Hom-finite} over $k$, if the $k$-module ${\rm Hom}_{\mathbf{D}_{\rm sg}(\mathcal{A})}(X, Y)$ is of finite length for any object $X$ and $Y$.

 The following result characterizes the Hom-finiteness of  $\mathbf{D}_{\rm sg}(\mathcal{A})$   using  virtually periodic objects.

\begin{prop}\label{prop:Hom-fin}
Let  $M$ be a virtually $1$-periodic object in $\mathcal{A}$ which generates $\mathbf{D}_{\rm sg}(\mathcal{A})$. Then the following statements are equivalent.
\begin{enumerate}
\item The category $\mathbf{D}_{\rm sg}(\mathcal{A})$ is Hom-finite over $k$.
\item One of the homogeneous components of $\Gamma(M; 1)$ is nonzero and of finite length.
\item Each homogeneous component of $\Gamma(M; 1)$ is nonzero and of finite length.
\end{enumerate}
\end{prop}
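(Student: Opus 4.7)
The plan is to prove the cycle $(1) \Rightarrow (3) \Rightarrow (2) \Rightarrow (3) \Rightarrow (1)$, with essentially all content concentrated in $(3) \Rightarrow (1)$. The implication $(3) \Rightarrow (2)$ is trivial. For $(1) \Rightarrow (3)$: Hom-finiteness forces each homogeneous component of $\Gamma(M;1)$ to be of finite length, and non-vanishing in each degree follows from Theorem~\ref{thm} applied to the virtually $1$-periodic object $M$. For $(2) \Rightarrow (3)$: non-vanishing in each degree is again Theorem~\ref{thm}, while Lemma~\ref{lem:Hom-inf}(3) with $d=1$ contrapositively says that if some homogeneous component of $\Gamma(M;1)$ is of finite length, then so is every component.

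The substantive step is $(3) \Rightarrow (1)$, which I would establish by a two-step thick-subcategory argument exploiting that $M$ generates $\mathbf{D}_{\rm sg}(\mathcal{A})$. First consider
\[
\mathcal{X} = \{ X \in \mathbf{D}_{\rm sg}(\mathcal{A}) \mid {\rm Hom}_{\mathbf{D}_{\rm sg}(\mathcal{A})}(X, \Sigma^n(M)) \text{ is of finite length for every } n \in \mathbb{Z} \}.
\]
Condition (3) yields $M \in \mathcal{X}$. The subcategory $\mathcal{X}$ is evidently closed under shifts and direct summands (a direct summand of a finite length $k$-module has finite length), and closure under triangles follows from the cohomological long exact sequence of ${\rm Hom}_{\mathbf{D}_{\rm sg}(\mathcal{A})}(-, \Sigma^n(M))$ together with the standard fact that in a short exact sequence of $k$-modules with outer terms of finite length, the middle term has finite length as well. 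Hence $\mathcal{X}$ is thick, and since $M$ generates $\mathbf{D}_{\rm sg}(\mathcal{A})$, one concludes $\mathcal{X} = \mathbf{D}_{\rm sg}(\mathcal{A})$.

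Next, fix any $X \in \mathbf{D}_{\rm sg}(\mathcal{A})$ and consider
\[
\mathcal{Y}_X = \{ Y \in \mathbf{D}_{\rm sg}(\mathcal{A}) \mid {\rm Hom}_{\mathbf{D}_{\rm sg}(\mathcal{A})}(X, \Sigma^n(Y)) \text{ is of finite length for every } n \in \mathbb{Z} \}.
\]
The previous step gives $M \in \mathcal{Y}_X$, and the analogous sandwich argument, this time applied to the long exact sequence in the second variable, shows that $\mathcal{Y}_X$ is thick. Hence $\mathcal{Y}_X = \mathbf{D}_{\rm sg}(\mathcal{A})$, and specializing to $n = 0$ yields the desired Hom-finiteness. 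I do not anticipate any substantive obstacle here: the only points to verify are the sandwich step for finite length modules and closure under direct summands, both routine, together with the bookkeeping of running the generation argument twice.
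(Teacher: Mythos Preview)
Your proof is correct and follows essentially the same route as the paper: both use Theorem~\ref{thm} for the non-vanishing, Lemma~\ref{lem:Hom-inf}(3) for the equivalence $(2)\Leftrightarrow(3)$, and the generation hypothesis for $(1)\Leftrightarrow(3)$. The only difference is that the paper invokes the implication $(3)\Rightarrow(1)$ as a ``standard fact'' about generators in triangulated categories, whereas you spell out the two-variable thick-subcategory argument explicitly.
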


\begin{proof}
By Theorem~\ref{thm}, each homogeneous component of $\Gamma(M; 1)$ is nonzero. Since $M$ generates $\mathbf{D}_{\rm sg}(\mathcal{A})$, it is a standard fact that $\mathbf{D}_{\rm sg}(\mathcal{A})$ is Hom-finite if and only if ${\rm Hom}_{\mathbf{D}_{\rm sg}(\mathcal{A})}(M, \Sigma^n(M))$ is of finite length for any integer $n$. This proves ``$(1)\Leftrightarrow (2)$". The implications ``$(2)\Leftrightarrow (3)$" follow from Lemma~\ref{lem:Hom-inf}(3).
\end{proof}

In what follows, we assume that $\Lambda$ is an artin algebra over a commutative artinian ring $k$. We keep the setup in Subsection~\ref{subsec:3.2}.  We mention that the Hom-finiteness of the singularity category of certain artin algebras is studied in \cite[Section~5]{Chen}.

We have the following trichotomy on the Hom-finiteness of the  total cohomology algebra $H^*(L)$ of the associated dg Leavitt algebra.

\begin{prop}\label{prop:Leavitt2}
Let $\Lambda$ be an artin algebra and $L$ be the associated dg Leavitt algebra. Then the following statements hold.
\begin{enumerate}
\item The  algebra $\Lambda$ has finite global dimension if and only if $H^*(L)=0$.
\item The  algebra $\Lambda$ has infinite global dimension and $\mathbf{D}_{\rm sg}(\Lambda)$ is Hom-finite if and only if each homogeneous component of  $H^*(L)$ is nonzero and of finite length.
\item The category $\mathbf{D}_{\rm sg}(\Lambda)$ is not Hom-finite if and only if each homogeneous component of  $H^*(L)$ is of infinite length.
\end{enumerate}
\end{prop}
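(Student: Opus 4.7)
The plan is to read the trichotomy off from the graded algebra isomorphism $H^*(L)^{\rm op}\simeq \Gamma(\Lambda_0;1)$ supplied by Lemma~\ref{lem:L}, using that $\Lambda_0$ is virtually $1$-periodic by Example~\ref{exm:artin} and generates $\mathbf{D}_{\rm sg}(\Lambda)$. Statement (1) is nothing but Proposition~\ref{prop:Leavitt}(1), so it requires no new argument.

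For statement (2), I would assume that $\Lambda$ has infinite global dimension and apply Proposition~\ref{prop:Hom-fin} directly to the virtually $1$-periodic generator $M=\Lambda_0$: it asserts that $\mathbf{D}_{\rm sg}(\Lambda)$ is Hom-finite over $k$ if and only if every homogeneous component of $\Gamma(\Lambda_0;1)$ is nonzero and of finite length. Transporting this equivalence across the isomorphism of Lemma~\ref{lem:L} yields precisely the claim on $H^*(L)$.

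For statement (3), the idea is to combine Lemma~\ref{lem:Hom-inf}(3) with the fact that Hom-finiteness of $\mathbf{D}_{\rm sg}(\Lambda)$ can be tested on the generator $\Lambda_0$. Concretely, since $\Lambda_0$ generates $\mathbf{D}_{\rm sg}(\Lambda)$, a standard d\'evissage along exact triangles shows that $\mathbf{D}_{\rm sg}(\Lambda)$ is Hom-finite if and only if ${\rm Hom}_{\mathbf{D}_{\rm sg}(\Lambda)}(\Lambda_0, \Sigma^n(\Lambda_0))$ is of finite length for every integer $n$. Hence $\mathbf{D}_{\rm sg}(\Lambda)$ fails to be Hom-finite exactly when at least one homogeneous component of $\Gamma(\Lambda_0;1)$ has infinite length, and Lemma~\ref{lem:Hom-inf}(3) then promotes this to all components having infinite length. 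Translating through Lemma~\ref{lem:L} gives the equivalence in (3).

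The only delicate point is the d\'evissage step in (3), namely that Hom-finiteness of the whole singularity category can be tested on the single generator $\Lambda_0$; but this is exactly the fact already used in the proof of Proposition~\ref{prop:Hom-fin}, and it follows from the five-lemma applied to the long exact ${\rm Hom}$-sequences together with stability of finite length under finite extensions and direct summands. Thus the proposition is essentially a repackaging of Lemma~\ref{lem:L}, Lemma~\ref{lem:Hom-inf}(3), and Proposition~\ref{prop:Hom-fin}, with no substantially new obstacle.
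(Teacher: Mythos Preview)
Your proposal is correct and follows essentially the same approach as the paper: identify $H^*(L)^{\rm op}$ with $\Gamma(\Lambda_0;1)$ via Lemma~\ref{lem:L}, use that $\Lambda_0$ is virtually $1$-periodic and generates $\mathbf{D}_{\rm sg}(\Lambda)$, and then invoke Propositions~\ref{prop:Leavitt} and~\ref{prop:Hom-fin}. Your treatment of (3) via Lemma~\ref{lem:Hom-inf}(3) and the d\'evissage argument is just an explicit unpacking of what Proposition~\ref{prop:Hom-fin} already contains, so there is no genuine divergence.
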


\begin{proof}
By Example~\ref{exm:artin}, $\Lambda_0$ is virtually $1$-periodic and it clearly generates $\mathbf{D}_{\rm sg}(\Lambda)$. By Lemma~\ref{lem:L}, we identify $H^*(L)^{\rm op}$ with $\Gamma(\Lambda_0; 1)$ defined in (\ref{equ:gamma}). Then the results follow from Propositions~\ref{prop:Leavitt} and \ref{prop:Hom-fin}.
\end{proof}

\begin{rem}
We point that Proposition~\ref{prop:Leavitt2}(2) is somehow analogous to the characterization of Gorenstein local rings in  \cite[Theorem~6.4]{AV}; compare Remark~\ref{rem:AV}.

Assume that $\Lambda$ is a Gorenstein artin algebra. By \cite[Theorem~4.4]{Buc}, the singularity category $\mathbf{D}_{\rm sg}(\Lambda)$ is triangle equivalent to the stable category of Gorenstein-projective $\Lambda$-modules. In particular, $\mathbf{D}_{\rm sg}(\Lambda)$ is Hom-finite and thus each homogeneous component of $H^*(L)$ is of finite length. In general, unlike the commutative case, the Hom-finiteness of $\mathbf{D}_{\rm sg}(\Lambda)$ does not imply the Gorensteinness of $\Lambda$; see \cite[Example~4.3]{Chen09} for concrete examples.
\end{rem}

\vskip 15pt

\noindent{\bf Acknowledgements}.\quad The authors thank Takuma Aihara, Hongxing Chen, Wei Hu and  Zhengfang Wang for  many helpful discussions. This work is supported by  the National Natural Science Foundation of China (No.s 12171207, 12131015 and 12161141001).

\bibliography{}

\vskip 10pt

 {\footnotesize \noindent  Xiao-Wu Chen\\
 Key Laboratory of Wu Wen-Tsun Mathematics, Chinese Academy of Sciences,\\
 School of Mathematical Sciences, University of Science and Technology of China, Hefei 230026, Anhui, PR China\\

 \footnotesize \noindent Zhi-Wei Li, Xiaojin Zhang\\
School of Mathematics and Statistics, Jiangsu Normal University, Xuzhou 221116, Jiangsu, PR China\\

 \footnotesize \noindent Zhibing Zhao\\
 School of Mathematical Sciences, Anhui University,  Hefei 230601, Anhui, PR China}

\end{document}